\numberwithin{equation}{section}
\newtheorem{theo}{THEOREM}[section]
\newtheorem{cor}[theo]{Corollary}
\newtheorem{prop}[theo]{Proposition}
\newtheorem{dfntn}[theo]{Definition}
\newtheorem{rem}[theo]{Remark}
\newtheorem{claim}[theo]{Claim}
\newtheorem{ex}[theo]{Example}
\newtheorem{fact}[theo]{FACT}
\newtheorem{facts}[theo]{FACTS}
\newenvironment{rem*}{\begin{rem}\em}{\end{rem}}
\newenvironment{ex*}{\begin{ex}\em}{\end{ex}}
\newenvironment{claim*}{\begin{claim}\em}{\end{claim}}
\newenvironment{facts*}{\begin{facts}\em}{\end{facts}}
\newenvironment{fact*}{\begin{fact}\em}{\end{fact}}
\newcommand{\brref}[1]{(\ref{#1})}
\newcommand{\Pin}[1]{{\mathbb P}^{#1}}
\newcommand{\restrict}[2]{{#1}_{\mid _{#2}}}
\newcommand{\lra}{\longrightarrow}
\newcommand{\Projcal}[1]{\mathbb{ P}({\mathcal #1})}
\newcommand{\oofp}[2]{{\mathcal O}_{\mathbb{ P}^{#1}}({#2})}
\newcommand{\scrollcal}[1]{(\Projcal{#1},\tautcal{#1})}
\newcommand {\xel} {(X, L)}
\newcommand{\tautcal}[1]{{\mathcal O}_{\mathbb{P}({\mathcal#1})}(1)}
\newcommand{\num}{\equiv}
\newcommand{\Pp}{\mathbb P}
\newcommand{\Oc}{\mathcal O}
\newcommand{\FF}{\mathbb{F}}
\newcommand{\FFe}{\FF_e}
\title[Uniform vector bundles on Hirzebruch surfaces]{On families of rank-$2$ uniform bundles on Hirzebruch surfaces and Hilbert schemes of their scrolls}
\author{Gian Mario Besana}
\address{Gian Mario Besana\\
College of Computing and Digital Media\\ DePaul University\\
1 E. Jackson \\
Chicago IL 60604\\USA}
\email{gbesana@depaul.edu}
\author{Maria Lucia Fania}
\address{Maria Lucia Fania\\ Dipartimento di Ingegneria e Scienze dell'Informazione e Matematica\\
Universit\`{a} degli Studi di L'Aquila\\
Via Vetoio Loc. Coppito\\67100 L'Aquila\\Italy}
\email{fania@univaq.it}
\author{Flaminio Flamini}
\address{Flaminio Flamini\\Dipartimento di Matematica\\ Universit\`a degli Studi di Roma
Tor Vergata \\ Viale della Ricerca Scientifica, 1 - 00133 Roma\\Italy}
\email{flamini@mat.uniroma2.it}
\subjclass[2000]{Primary 14J30,14J26,14J60,14C05; Secondary 14D20,14N25,14N30}
\keywords{Vector bundles, Rational surfaces, Ruled threefolds, Hilbert schemes, Moduli spaces}
\thanks{Acknowledgments:  The first author acknowledges support from the interim Provosts of DePaul University, Patricia O' Donoghue and David Miller. The second and third author acknowledge partial support by MIUR funds, PRIN project \lq\lq\thinspace Geometria delle
Variet\`a Algebriche''.}
\begin{document}



\begin{abstract} Several families of rank-two vector bundles on Hirzebruch surfaces are shown to consist of all very ample, {\em uniform} bundles. Under suitable numerical assumptions, the projectivization of these bundles, embedded by their tautological line bundles as linear scrolls, are shown to correspond to smooth points of components of their Hilbert scheme, the latter having the expected dimension. If $e=0,1$ the scrolls fill up the entire component of the Hilbert scheme, while for $e=2$ the scrolls exhaust a subvariety of codimension $1. $
\end{abstract}


\maketitle

\section{Introduction}
Vector bundles over smooth, complex, varieties and their moduli spaces, have been intensely studied by several authors over the years (see e.g. the bibliography in \cite{BR} for an overview).

In looking at the landscape of vector bundles over smooth projective varieties, with the eyes of a classical projective geometer, it is natural to wonder about the relationship between that landscape and the parallel world of the families of projective varieties obtained by embedding the projectivized bundles, when possible.  Several authors have investigated Hilbert schemes of projective varieties that arise naturally as embeddings of projectivization of vector bundles, when the appropriate conditions of very ampleness for the bundles themselves, or equivalently for the tautological line bundle on their projectivization, hold.

In the first case of interest, i.e. rank-two, degree d vector bundles over genus g curves $C$, the paper of C. Segre, \cite{Seg}, has to be considered as a corner-stone. Segre's work has indeed inspired several investigations
on surface scrolls in projective spaces (cf.\,e.g.\,\cite{Ghio,APS,GP1,GP3}) as well as a recent systematic study of Hilbert schemes of such surfaces, \cite{CCFMdeg,CCFMnonsp,CCFMsp}.
Morever, the fact that any rank-two vector bundle $\mathcal E$ on $C$ is an extension of line bundles is translated in Segre's language in terms of
Hilbert schemes of unisecants on the ruled surface $\mathbb{P}_C(\mathcal E)$, with fixed degree w.r.t. its tautological line bundle. This viewpoint has been recently considered in \cite{BeFe,CFsp,CFbn,Muk2,Muk},
where the authors study questions on {\em Brill-Noether loci} in the moduli space $U_C(d)$ ($SU_C(L)$, resp.)
of semi-stable, rank-two vector bundles with fixed degree $d$ (fixed determinant $L \in {\rm Pic}^d(C)$, resp.) on $C$, just in terms of extensions of line bundles and Hilbert schemes of unisecants. This series of papers leverages the relationship between the Hilbert scheme approach and the vector-bundle one, in order to obtain results on rank-two vector bundles on curves using primarily projective techniques of embedded varieties.

In attempting to extend this comparative analysis of the two approaches to vector bundles, and correspondingly linear scrolls, over higher dimensional varieties, one is naturally led to consider rank-two vector bundles over ruled surfaces on one hand, and three dimensional linear scrolls embedded with low codimension on the other, as first steps. As far as the latter are concerned, if the codimension is $2,$ it is known \cite{ot1} that there are only four such examples:  the
Segre scroll, the Bordiga scroll, the Palatini scroll, the $K3$-scroll. The first two examples are varieties defined by the maximal minors of an appropriate matrix of linear forms and their Hilbert scheme has being described by Ellingsrud \cite{e}. More generally  the Hilbert scheme of subvarieties
of positive dimension in
projective space which are cut-out by maximal minors
of a matrix with polynomial entries is considered in \cite{fae-fan2,kle}.
As for the Palatini scroll, its Hilbert scheme is described in \cite{fa-me}, while its natural generalizations as Hilbert schemes of scrolls that arise as degeneracy loci of general morphisms
$\phi: {\mathcal O}_{{\Pp}^{2k-1}}^{\oplus m}\lra {\Omega}_{{\Pp}^{2k-1}}(2)$ are studied in \cite{fae-fan} and \cite{fae-fan2}. Further examples of $K3$-scrolls are presented in \cite{fla}.
Turning our attention to rank-two vector bundles over ruled surfaces, one first has the complete classification given by Brosius \cite{bro}, who introduced a canonical way of representing them as extensions of suitable coherent sheaves as in Segre's approach for curves. An equivalent way of obtaining such rank-two bundles as extensions, naturally compatible with Brosius' model, was introduced by Aprodu and Brinzanescu, \cite{BR,ap-br1}, who studied moduli spaces of such vector bundles, independently of any notion of stability.

Notwithstanding this thorough understanding of such bundles, in order to implement our program of investigation, one has to deal with the significant difficulty of establishing the very ampleness of the vector bundle (or tautological line bundle). Even just for rank-two vector bundles over rational ruled surfaces this is a delicate problem. Alzati and the first author, \cite{al-be}, gave a numerical criterion that enables one, in some cases, to establish the necessary very ampleness. This criterion, joined with a need to complete the study of smooth projective varieties of small degree, \cite{fa-li9}, \cite{fa-li10}, \cite{be-bi}, motivated the authors to investigate Hilbert schemes of threefold scrolls given by a particular family of vector bundles $\mathcal{E},$ of rank $2$ over Hirzebruch surfaces $\FFe.$

In a series of three papers, the authors dealt with a family of rank-two vectors bundles $\mathcal{E}$ with first Chern class $c_1(\mathcal{E}) = 3C_0 + \lambda f,$ where $C_0$ and $f$ are the standard generators of the Picard group of $\FFe.$ In particular, if $e = 0, 1,$  in \cite{be-fa} and \cite{be-fa-fl} the authors show that the irreducible component of the Hilbert scheme containing such scrolls is  generically smooth, of the expected dimension, and that its general point is actually a threefold scroll, and thus the corresponding component of the Hilbert scheme is filled up completely by scrolls. Then in \cite{fa-fla}, the second and third author extended the study of the same family of bundles (and scrolls) to the cases with  $ e \geq 2.$ In particular, they showed that, similarly to the previous cases, there exists an irreducible component of the Hilbert scheme containing such scrolls, which is generically smooth, of the expected dimension, with the given scroll corresponding to a smooth point. In contrast to the previous cases though, the family of constructed scrolls surprisingly does not fill up the whole component. A candidate variety to represent the general point of the component was also constructed, and it was shown that one can then flatly degenerate a given scroll to the new variety, in such a way that the base–scheme of the flat, embedded degeneration is entirely
contained in the given component.

In this note we observe that, if one allows the degree of the embedded scrolls to be relatively high, the criterion in \cite{al-be} establishes the very ampleness of other families of vector bundles $\mathcal{E}$ over $\FFe, $ with $c_1(\mathcal{E})= 4C_0 + \lambda f.$ All very ample rank-two bundles in these families are shown to be uniform, with splitting type $(3,1),$ see Proposition \ref{Euniform}. Investigating fully the  cohomological properties of the scrolls considered here would require an extensive enumeration of possible cases, according to sets of values for the parameters involved, see Remark \ref{rem:osserva}, that goes beyond the scope of this note. Therefore, in the second part of this work, scrolls over surfaces $\FFe$ with $e\le 2$ are considered, for which convenient cohomology vanishing can be obtained, see Theorem \ref{thm:Hilbertscheme}.
Nonetheless, the overall framework is quite general and could be adapted to encompass the rest of the bundles in the identified uniform families. Under the new assumptions, Theorem \ref{thm:Hilbertscheme} shows that the scrolls under consideration are smooth points of a component of the Hilbert scheme of embedded projective varieties with the same Hilbert polynomial, with the expected dimension. Leveraging both the vector bundle approach and the Hilbert scheme one, Theorem \ref{thm:parcount} shows that scrolls obtained from our families of vector bundles fill up their component of the Hilbert scheme if $e=0,1$ but exhaust a subvariety of codimension $1$ when $e=2.$ The last result extends to this new class of scrolls results from \cite{fa-fla}.
Beyond the obvious goal of extending these results to scrolls over $\FFe$ for $e\ge3,$ a few other natural questions arise. Following \cite{fa-fla}, in the cases in which our scrolls fill out a positive codimension subvariety of the component of the Hilbert scheme, can one describe a variety $Z$ which is a candidate to represent the general point of such component?
Assuming that a description of a variety $Z$ as above is achieved, can one interpret the projective degeneration
of $Z$ to a scroll of ours in terms of vector bundles on Hirzebruch surfaces?
All these questions will be addressed in forthcoming works.

It is a pleasure for the authors to dedicate this note to Emilia Mezzetti, in the occasion of her recent life milestone.


\section{Notation and Preliminaries}
\label{notation}
Throughout this paper we will use the following notation:
  \begin{enumerate}
\item [ ] $X$ is a smooth, irreducible, complex projective variety of dimension $3$ (or simply a $3$-fold);
\item[] $\Oc_X$ is the structure sheaf of $X;$
\item[ ]$\chi(\mathcal F)=\sum_{i=0}^3 (-)^ih^i(X,\mathcal{F})$ is the Euler characteristic of any coherent sheaf $\mathcal F$  on $X$;
\item[ ]  $\restrict{\mathcal F}{Y}$ is the restriction of $\mathcal F$ to any subvariety $Y \subset X$;
\item[ ]  $K_X$ (or simply $K$, when the context is clear) is a canonical divisor on $X;$
\item[ ] $c_i = c_i(X)$ is the $i^{th}$ Chern class
of $X;$
\item[ ] $c_i(\mathcal{E})$ is the $i^{th}$ Chern class
of a vector bundle $\mathcal{E}$ on $X;$
\item[ ] if $L$ is a very ample line bundle on $X,$ then $d = \deg{X} = L^3$ is the degree of $X$ in the embedding
given by $L;$
\item[] if $S$ is a smooth surface, $\equiv$ will denote the numerical equivalence of divisors on $S$ and, if $W \subset S$ is any closed subscheme, we will simply denote by ${\mathcal J}_{W}$ its ideal sheaf in $\mathcal O_S.$
\end{enumerate}

Cartier divisors, their associated line bundles and the
invertible sheaves of their holomorphic sections are used with no
distinction. Mostly additive notation is used for their group. Juxtaposition is used to denote intersection of divisors.
For any notation and terminology not explicitly listed here, please refer to \cite{H}.

\begin{dfntn} Let $X$ be a $3$-fold and $L$ be an ample line bundle on $X$. The pair $(X, L)$ is called
a {\em scroll} over a normal variety $Y$ if there exist an ample line
bundle $M$ on $Y$ and a surjective morphism  $\varphi: X \to Y$, with
connected fibers, such that $K_X + (4 - \dim Y) L = \varphi^*(M).$
\end{dfntn}

When $Y$ is smooth and $\xel$ is a scroll over $Y$,  then (cf.\,\cite[Prop. 14.1.3]{BESO})
$X \cong \Projcal{E} $, where ${\mathcal E}= \varphi_{*}(L)$ and $L$ is the tautological  line bundle on $\Projcal{E}.$ Moreover, if $S \in |L|$ is a smooth divisor,
then (see e.g. \cite[Thm. 11.1.2]{BESO}) $S$ is the blow up of $Y$ at $c_2(\mathcal{E})$ points; therefore $\chi({\mathcal O}_{Y}) = \chi({\mathcal O}_{S})$ and
\begin{equation}\label{eq:degree}
d : = L^3 = c_1^2(\mathcal{E})-c_2(\mathcal{E}).
\end{equation}

In this paper, we will consider three dimensional scrolls $X$ whose base, $Y ,$  is the Hirzebruch surface $\FF_e= \Pp(\Oc_{\Pp^1} \oplus\Oc_{\Pp^1}(-e))$,
with $e \ge 0$ an integer. If $\pi : \FF_e \to \Pp^1$ denotes the natural projection, then
${\rm Num}(\FF_e) = \mathbb{Z}[C_0] \oplus \mathbb{Z}[f],$ where $C_0$ is the unique section corresponding to
$\Oc_{\Pp^1} \oplus\Oc_{\Pp^1}(-e) \to\!\!\!\to \Oc_{\Pp^1}(-e)$ on $\Pp^1$, and $f = \pi^*(p)$, for any $p \in \Pp^1.$
In particular, it is $C_0^2 = - e, \; f^2 = 0, \; C_0f = 1.$

Let $\mathcal{E}$ be a rank-two vector bundle over $\FF_e.$  Then $c_1( \mathcal{E}) \num a C_0 + c f$, for some $ a, c \in \mathbb Z$, and $c_2(\mathcal{E})=\gamma \in \mathbb Z$. In this context, following Aprodu and Brinzanescu, \cite[\S\,1]{ap-br1},  one can consider two numerical invariants associated to $\mathcal{E},$ as follows :

\begin{itemize}
\item[$(i)$] Let $f \simeq \Pp^1$ be a general fibre of the map $\pi.$ Then  $\mathcal{E}_{|f}\cong \Oc_{f} (d_1)\oplus \Oc_{f}(d_2)$, where the pair $(d_1, d_2)$ is called the {\it generic splitting type} of $\mathcal{E}$, and where $d_2\le d_1,$ $d_1+d_2=a.$ Such an integer  $d_1$ is the first numerical invariant of  $\mathcal{E}.$

\item[$(ii)$] The integer $r$ defined as:
$$-r : = {\rm Inf} \; \{\ell \in \mathbb{Z} \; | \; \, \, H^0(\mathcal{E}(-d_1 \;C_0 + \ell \, f)) = H^0(\mathcal{E}(-d_1 \;C_0) \otimes \pi^*(\mathcal O_{\mathbb{P}^1} (\ell)))\neq 0\}$$ is the second numerical invariant of $\mathcal{E}.$
\end{itemize}

Recall that the vector bundle $\mathcal E$ is said to be {\em uniform} if the splitting type $(d_1, d_2)$ as in (i) is constant for any fibre $f$ (cf.\,e.g.\,\cite[Definition\,3]{ap-br1}).

%
%

\section{A family of  rank-two uniform vector bundles over $\FF_e$.} \label{S:vectorbundles}

Alzati and Besana,\,\cite[p.\,1211,\,Example 1]{al-be}, considered rank-two vector bundles on  $\FF_e$  constructed in the following way:
let
\begin{equation}\label{eq:al-bea}
L \num C_0 + b_{l} f \text{     and    } M \num 3 C_0 +b_{m} f
\end{equation}
be two line bundles on  $\FF_e$  with the assumption
\begin{equation} \label{esistenzafibra}
b_{m}-b_{l}-e-2>0,
\end{equation}
and let $W$ be a zero-dimensional subscheme consisting of two distinct, reduced points on a fixed fibre  $\overline{f}.$
Then one gets a rank-two vector bundle ${\mathcal E}$  fitting in the following exact sequence
\begin{equation}\label{eq:al-be1}
0 \to L \to {\mathcal E} \to M\otimes {\mathcal J}_{W} \to 0.
\end{equation}
Assuming that
\begin{equation}\label{ass x v.a.}
b_l > -2 \text{\ \ and \ \ } b_m\ge 3e+6,
\end{equation}
it follows that the vector bundle ${\mathcal E}$ is very ample (see \cite[Theorem 4.2]{al-be}).

The second of the assumptions \brref{ass x v.a.} can be written as  $b_m= 3e+6+t$ with $t \ge 0.$ Thus, from \eqref{esistenzafibra}, we get
   \begin{equation}\label{boundbl}
   b_{l} < 2e+4+t
   \end{equation}From \eqref{eq:al-be1}, in particular, one has $$c_1({\mathcal E}) = 4C_0 + (b_m+b_l)f \;\; {\rm and} \;\; c_2({\mathcal E}) = L\cdot M+2= \gamma.$$

For simplicity of notation, set  $b_l=b.$  Under our assumptions, we have   $$b_l+b_m=b+3e+6+t \text{\ \  and \ \ } c_2({\mathcal E}) = \gamma= 3b+8+t.$$

\begin{prop} \label{Euniform} Let  ${\mathcal E}$ be  any rank-two vector bundle as in (\ref{eq:al-be1}), for which  assumptions (\ref{ass x v.a.}) and \brref{boundbl} hold. Then  ${\mathcal E}$  is uniform,  of splitting type $(3,1)$.
\end{prop}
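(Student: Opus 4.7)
The plan is to realize $\mathcal{E}$ globally as an extension of two line bundles on $\FFe$ whose restriction to every fibre of $\pi$ splits with type $(3,1)$; uniformity will then follow.

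\smallskip
\noindent \textbf{Step 1 (the pushforward $\pi_*\mathcal{E}(-3C_0)$).} Apply $\pi_*$ to the sequence obtained from (\ref{eq:al-be1}) by twisting with $\calo(-3C_0)$. Since $L(-3C_0)|_f \cong \calo_f(-2)$, one has $\pi_*L(-3C_0)=0$, while relative Serre duality (using $\omega_\pi = \calo(-2C_0-ef)$) gives $R^1\pi_*L(-3C_0) \cong \calo_{\Pp^1}(b_l+e)$. As $M(-3C_0)\cong \pi^*\calo_{\Pp^1}(b_m)$ and the pushforward of $0\to \mathcal{J}_W\to \calo_{\FFe}\to \calo_W\to 0$ identifies $\pi_*\mathcal{J}_W \cong \mathcal{J}_p$ with $p := \pi(\overline f)$, the projection formula yields $\pi_*(M(-3C_0)\otimes \mathcal{J}_W) \cong \calo_{\Pp^1}(b_m-1)$. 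The connecting homomorphism then lies in $\mathrm{Hom}(\calo(b_m-1),\calo(b_l+e)) = H^0(\Pp^1, \calo(b_l+e-b_m+1))$, which vanishes by (\ref{esistenzafibra}). Hence
$$\pi_*\mathcal{E}(-3C_0) \cong \calo_{\Pp^1}(b_m-1).$$

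\smallskip
\noindent \textbf{Step 2 (a saturated rank-one subsheaf).} Adjunction produces a nonzero morphism $\mathcal{N} := \calo_{\FFe}(3C_0+(b_m-1)f) \to \mathcal{E}$, injective since $\mathcal{N}$ is a line bundle and $\mathcal{E}$ is torsion-free. Let $\mathcal{N}'=\mathcal{N}(D)$ denote the saturation of $\mathcal{N}$ in $\mathcal{E}$, with $D$ an effective divisor. Cohomology and base change at the generic point of $\Pp^1$, applied to the line bundle $\pi_*\mathcal{E}(-3C_0)$, give $h^0(\mathcal{E}|_f(-3))=1$ on the generic fibre, fixing the generic splitting type as $(3,1)$. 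Therefore the line subbundle $\mathcal{N}'|_{f_\eta}$ of $\mathcal{E}|_{f_\eta}$ has degree at most $3$, while $\mathcal{N}\cdot f = 3$, forcing $D\cdot f = 0$; thus $D$ is vertical, a sum of fibres. If $D$ contained some fibre $f_p$, then $\calo(3C_0 + b_m f)\hookrightarrow \mathcal{E}$ would produce a nonzero section of $\mathcal{E}(-3C_0-b_m f)$, contradicting $\pi_*\mathcal{E}(-3C_0-b_m f) \cong \calo_{\Pp^1}(-1)$. Hence $D=0$ and $\mathcal{N}$ is saturated.

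\smallskip
\noindent \textbf{Step 3 (line-bundle quotient and uniformity).} The rank-one torsion-free quotient can be written $\mathcal{E}/\mathcal{N} \cong \calo(C_0+(b_l+1)f)\otimes \mathcal{J}_Z$ for some finite subscheme $Z$, and the identity
$$c_2(\mathcal{E}) = c_1(\mathcal{N})\cdot c_1(\mathcal{E}/\mathcal{N}) + \mathrm{length}(Z)$$
evaluates, by direct Chern-class computation, to $3b_l+8+t = (3b_l+8+t) + \mathrm{length}(Z)$, so $\mathrm{length}(Z)=0$. Thus $\mathcal{E}$ fits in the short exact sequence of line bundles
$$0 \to \calo_{\FFe}(3C_0+(b_m-1)f) \to \mathcal{E} \to \calo_{\FFe}(C_0+(b_l+1)f) \to 0,$$
whose restriction to any fibre $f\cong\Pp^1$ is $0\to\calo_f(3)\to\mathcal{E}|_f\to\calo_f(1)\to 0$. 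This splits on every fibre because $\mathrm{Ext}^1(\calo(1),\calo(3))=H^1(\Pp^1,\calo(2))=0$, yielding $\mathcal{E}|_f\cong\calo_f(3)\oplus\calo_f(1)$ throughout, i.e.\ $\mathcal{E}$ is uniform of splitting type $(3,1)$. The delicate point is the saturation argument in Step 2: a priori $d_1$ could jump on special fibres, and the vanishing $H^0(\calo_{\Pp^1}(-1))=0$ together with assumption (\ref{esistenzafibra}) is precisely what rules this out.
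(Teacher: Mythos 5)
Your proof is correct, and it takes a genuinely different route from the paper's. The paper works entirely with the Aprodu--Brinzanescu numerical machinery: it first rules out generic splitting type $(2,2)$ by showing $\ell(c_1,c_2,2,r)<0$ against \cite[Theorem 1]{ap-br1}, then computes the second invariant $r=3e+5+t$ from the twisted sequence \eqref{eq:al-betwisted} and concludes uniformity from $\ell(c_1,c_2,3,r)=0$ via \cite[Corollary 5]{ap-br1}. You instead compute $\pi_*\mathcal{E}(-3C_0)\cong\mathcal{O}_{\Pp^1}(b_m-1)$ directly (your vanishing of the connecting map uses \eqref{esistenzafibra}, which under $b_m=3e+6+t$ is equivalent to \eqref{boundbl}, so the hypotheses are respected), read off the generic type $(3,1)$ from the rank of this pushforward, prove the induced sub-line-bundle $3C_0+(b_m-1)f$ is saturated with locally free quotient, and get uniformity from the fibrewise splitting of the resulting extension of line bundles. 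In effect you reconstruct from scratch the canonical extension \eqref{eq:al-beca}--\eqref{eq:al-beca3} (note $b_m-1=3e+5+t$ and $b_l+1=b+1$, so your $\mathcal{N}$ and $\mathcal{E}/\mathcal{N}$ are exactly $A$ and $B$), which the paper only invokes \emph{after} the proposition by citing \cite[Prop.\,7.2]{al-be} and \cite{bro}; your $b_m-1$ also recovers the invariant $r=3e+5+t$ implicitly. The trade-off: the paper's argument is shorter given the quoted results and computes $r$ explicitly, while yours is self-contained, avoids the black-box $\ell$-invariant, and yields the splitting data needed later as a by-product. All the delicate points in your write-up check out: $\pi_*\mathcal{J}_W\cong\mathcal{J}_p$ because both points of $W$ lie on one fibre, the degree bound $\deg(\mathcal{N}'|_{f_\eta})\le 3$ does require knowing the generic type first (which you supply), the second Chern class count forces $Z=\emptyset$, and exactness of the restriction to every fibre holds because the quotient is locally free.
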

\begin{proof}  Since  ${\mathcal E}$ is a very ample,  rank-two vector bundle with $c_1({\mathcal E}) = 4C_0 + (b_m+b_l)f$ then the generic splitting type of ${\mathcal E}$ is either $(3,1)$ or $(2,2)$.

\begin{claim}\label{cl:1}
$(2,2)$ cannot occur as generic splitting type.
\end{claim}

\begin{proof}[Proof of Claim \ref{cl:1}] With notation as above, consider, as in \cite[Theorem 1]{ap-br1}, the following integer
$\ell(c_1, c_2, d_1, r):= \gamma+a(d_1e-r)-(b_l+b_m)d_1+2d_1r-d_1^2e$ and
assume by contradiction that $(2,2)$ occurs as generic splitting type, i.e. $d_1 = d_2 =2.$ Then, with our notation and under our numerical
assumptions, it follows that
\begin{alignat}{1}
 \ell(c_1, c_2, 2, r)&= 3b+8+t+4(2e-r)-2(3e+b+6+t)+4r-4e\\ \notag
  &=b-t-2e-4.\nonumber
 \end{alignat}
 By (\ref{boundbl}) it follows that $ \ell(c_1, c_2, d_1, r)<0$ which contradicts \cite[Theorem 1]{ap-br1}.
Thus $(2,2)$ cannot occur as generic splitting type.
\end{proof}

Claim \ref{cl:1} implies that ${\mathcal E}$ has generic splitting type $(3,1).$
To show that $\mathcal{E}$ is uniform, by \cite[Corollary 5]{ap-br1}, it is enough  to show that $ \ell(c_1, c_2, 3, r)=0.$ In order to compute $\ell,$  the invariant $r$ must first be considered. Tensoring the exact sequence (\ref{eq:al-be1}) by $-3C_0 \otimes \pi^*(  \Oc_{\Pp^1}(\ell)) = - 3 C_0 + \ell f$ gives
\begin{equation}\label{eq:al-betwisted}
0 \to -2C_0+(b+\ell)f \to {\mathcal E}(-3C_0 + \ell f) \to (3e+6+t+\ell)f\otimes {\mathcal J}_{W} \to 0.
\end{equation}
Note that $H^0( -2C_0+(b+\ell)f)=0$ and, by Serre duality,  $H^1( -2C_0+(b+\ell)f)=H^1( -(e+2+b+\ell)f)=H^1(\Pp^1, \Oc_{\Pp^1}(-(e+2+b+\ell)))=0$  if $-e-2-b-\ell\ge -1,$ that is if
\begin{equation}\label{bound1}
\ell \le -e-1-b.
\end{equation}
In this range it follows that $H^0({\mathcal E}(-3C_0 + \ell f))=H^0((3e+6+t+\ell)f\otimes {\mathcal J}_{W})\neq 0$ if and only if $3e + 6 + t + \ell \ge 1$ as, by construction, W is a zero-dimensional scheme consisting of two distinct points on a fibre. Thus
\begin{equation}\label{bound2}
\ell \ge -3e-5-t.
\end{equation}
Notice that for  (\ref{bound1}) and (\ref{bound2}) to be compatible it must be $b\le 2e+4+t,$ which certainly holds by \eqref{boundbl}. Thus we conclude that $r=3e+5+t.$

We finally can compute $$\ell(c_1, c_2, 3, r)=3b+8+t+4(3e-3e-5-t)-3(b+3e+6+t)+6(3e+5+t)-9e=0,$$ which implies that
${\mathcal E}$ is uniform.
\end{proof}

 Let ${\mathcal E}$ be a rank-two vector bundle over $\FF_e$ as in Proposition \ref{Euniform}.  Then
\begin{equation}\label{eq:C1C2}
c_1({\mathcal E}) \num  4 C_0 + (b+3e+6+t) f,\;\;\; c_2({\mathcal E}) = \gamma=3b+8+t, \;\; {\rm for} \;\; t \geq 0,
\end{equation}and  ${\mathcal E}$ is uniform, of splitting type $(3,1)$. Thus, (see \cite[Prop.7.2]{al-be} and \cite{bro}),  there exists an exact sequence
\begin{equation}\label{eq:al-beca}
0 \to A \to {\mathcal E} \to B \to 0,
\end{equation}where $A$ and $B$ are line bundles on $\FF_e$ such that
\begin{equation}\label{eq:al-beca3}
A \num 3 C_0 + (3e+5+t) f \;\; {\rm and} \;\; B \num C_0 + (b+1) f.
\end{equation}

From \eqref{eq:al-beca}, in particular, one has $c_1({\mathcal E}) = A + B \;\; {\rm and} \;\; c_2({\mathcal E}) = A\cdot B$. Note also that since ${\mathcal E}$ is very ample it follows that $B$ is ample and thus $b>e-1.$
\vspace{3mm}

Using \eqref{eq:al-beca}, we can compute cohomology of ${\mathcal E}$, $A,$ and $B$. Indeed, we have:

\begin{prop}\label{prop:com}  Let
${\mathcal E}$ be a rank-two vector bundle over $\FF_e$ as in Proposition \ref{Euniform}. Then
$$
h^i({\mathcal E}) = h^i(A) = h^i(B) =0, \;\mbox{for}\;\; i \geq 1  \;\; {\rm and} \;\; h^0({\mathcal E}) = 5e+2b+4t+28. $$
\end{prop}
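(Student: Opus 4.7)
The plan is to reduce everything to cohomology of the two line bundles $A$ and $B$ on $\FFe$ via the defining exact sequence \eqref{eq:al-beca}, then to compute those via the Leray spectral sequence for the projection $\pi : \FFe \to \Pp^1$. Taking the long cohomology sequence of
\[
0 \to A \to \mathcal{E} \to B \to 0,
\]
once I show $h^i(A) = h^i(B) = 0$ for all $i \geq 1$, I obtain simultaneously $h^i(\mathcal{E}) = 0$ for $i \geq 1$ and $h^0(\mathcal{E}) = h^0(A) + h^0(B)$.

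For the line bundle cohomology, I would use the standard identification
\[
h^q(\FFe, aC_0 + b'f) \;=\; \bigoplus_{j=0}^{a} h^q\bigl(\Pp^1, \calo_{\Pp^1}(b' - je)\bigr) \qquad (a \geq 0),
\]
coming from $\pi_*\calo_{\FFe}(aC_0)$ being a sum of line bundles on $\Pp^1$ whose degrees can be read off once we tensor with $\calo_{\Pp^1}(b')$. Applied to $B \equiv C_0 + (b+1)f$: since $B$ is ample we have $b \geq e$, so both summands $\calo_{\Pp^1}(b+1)$ and $\calo_{\Pp^1}(b+1-e)$ have non-negative degree, yielding $h^1(B) = 0$ and $h^0(B) = (b+2) + (b+2-e) = 2b + 4 - e$. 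Applied to $A \equiv 3C_0 + (3e+5+t)f$: the four summands have degrees $3e+5+t,\, 2e+5+t,\, e+5+t,\, 5+t$, all $\geq 0$ (since $t \geq 0, e \geq 0$), so $h^1(A) = 0$ and
\[
h^0(A) = \sum_{j=0}^{3}\bigl((3-j)e + 5 + t + 1\bigr) = 6e + 4t + 24.
\]
The $h^2$ vanishing for both $A$ and $B$ follows from Serre duality: $K_{\FFe} = -2C_0 - (e+2)f$, so $h^2(aC_0 + b'f) = h^0(-(a+2)C_0 + (-b'-e-2)f)$, which is zero whenever $a \geq -1$, hence for $a = 3$ and $a = 1$.

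Summing the contributions,
\[
h^0(\mathcal{E}) = h^0(A) + h^0(B) = (6e + 4t + 24) + (2b + 4 - e) = 5e + 2b + 4t + 28,
\]
which is the asserted formula. There is no real obstacle here: every step is a standard line-bundle computation on a Hirzebruch surface, and the only arithmetic input beyond $e \geq 0$, $t \geq 0$ is the ampleness bound $b \geq e$ needed to make the degrees on $\Pp^1$ non-negative in the splitting for $B$.
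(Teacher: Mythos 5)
Your proposal is correct and follows essentially the same route as the paper: reduce to $A$ and $B$ via the extension $0 \to A \to \mathcal{E} \to B \to 0$, compute their $h^0$ and $h^1$ by pushing forward to $\Pp^1$ (your displayed splitting formula is exactly the paper's $R^1\pi_*=0$ plus $Sym^a(\Oc_{\Pp^1}\oplus\Oc_{\Pp^1}(-e))$ computation), and kill $h^2$ by Serre duality using negativity against $C_0$. The arithmetic, including the use of $b\geq e$ from ampleness of $B$, matches the paper's.
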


\begin{proof} For dimension reasons, it is clear that
$h^j({\mathcal E}) =  h^j(A) = h^j(B) = 0, \; j \geq 3$. Recalling that  $K_{\FF_e} \equiv - 2 C_0 - (e+2) f,$ and using Serre duality, it is:
 $$h^2(A) = h^0( - 5C_0 - (4e+7+t)f) = 0 \ \  \text{ and}
\ \  h^2(B) = h^0( - 3 C_0 - (e +3+b)f) = 0.$$
In particular, this implies that  $h^2({\mathcal E}) = 0.$ In order to show that  $h^1(B) = h^1(A) = 0$ first notice that
$R^1\pi_*(B) =R^1\pi_*(\mathcal{O}(1))  \otimes  \oofp{1}{b+1} = 0$ and
$ R^1\pi_*(A) = R^1\pi_*(\mathcal{O}(3))  \otimes  \oofp{1}{3e+5+t} = 0$ (see for example \cite[p.253]{H}). Recalling that $b > e-1,$ and $t \ge 0,$ Leray's isomorphism then gives

\begin{eqnarray*}
h^1(B)  &=& h^1(\Pp^1, R^{0}\pi_{*}( C_0 + (b+1) f))  \\\nonumber
&=& h^1(\Pp^1, (\Oc_{\Pp^1}\oplus \Oc_{\Pp^1}(-e))\otimes \Oc_{\Pp^1}(b+1)) \\\nonumber
&=&h^1(\Pp^1, \Oc_{\Pp^1}(b+1))+h^1(\Pp^1, \Oc_{\Pp^1}(b+1-e))=0,
\end{eqnarray*}
and
\begin{eqnarray*}
h^1(A) &=& h^1(\Pp^1, R^{0}\pi_{*}(3 C_0 + (3e+5+t) f))  \\\nonumber
&=& h^1(\Pp^1, Sym^3(\Oc_{\Pp^1}\oplus \Oc_{\Pp^1}(-e))\otimes \Oc_{\Pp^1}(3e+5+t))
\\\nonumber
&=&h^1(\Pp^1,( \Oc_{\Pp^1}\oplus\Oc_{\Pp^1}(-e)\oplus\Oc_{\Pp^1}(-2e)\oplus\Oc_{\Pp^1}(-3e))\otimes \Oc_{\Pp^1}(3e+5+t)) \\\nonumber
&=&h^1(\Pp^1, \Oc_{\Pp^1}(3e+5+t)\oplus\Oc_{\Pp^1}(2e+5+t)\oplus \Oc_{\Pp^1}(e+5+t)\oplus \Oc_{\Pp^1}(5+t))=0.
\end{eqnarray*}

Similarly we get that
\begin{eqnarray*}
h^0(A)  &=& h^0(\Pp^1, \Oc_{\Pp^1}(3e+5+t)\oplus\Oc_{\Pp^1}(2e+5+t)\oplus \Oc_{\Pp^1}(e+5+t)\oplus \Oc_{\Pp^1}(5+t))\\\nonumber
&=&3e+6+t+2e+6+t+e+6+t+6+t  \\\nonumber
&=&6e+4t+24,\\\nonumber
\\
h^0(B)  &=& h^0(\Pp^1, \Oc_{\Pp^1}(b+1)\oplus\Oc_{\Pp^1}(b+1-e)) \\\notag
&=& 2b+4-e, \\\nonumber
\end{eqnarray*}and thus
$$h^0({\mathcal E})  = h^0(A)+h^0(B) = 6e+4t+24+ 2b+4-e= 5e+2b+4t+28.$$
\end{proof}

%

\section{$3$-dimensional scrolls over $\FF_e$ and their Hilbert schemes} \label{S:scroll3folds}
As all vector bundles in the the family introduced in \S\,\ref{S:vectorbundles} are very ample, they give rise to a corresponding family of threefolds embedded in projective space as linear scrolls over $\FF_e.$ In this section we will show that such threefolds correspond to smooth points of suitable components of the appropriate Hilbert scheme, and that in some cases (e.g.\,$e=2$) they fill up only a codimension $e-1$ subvariety of such a component.

Let  ${\mathcal E}$ be a very ample rank-two vector bundle over $\FF_e$ as in Proposition \ref{Euniform}, with $c_1({\mathcal E})$ and
$c_2({\mathcal E})$ as in \eqref{eq:C1C2}. As observed above, ${\mathcal E}$ fits in an exact sequence
as in \eqref{eq:al-beca}, where $A$ and $B$ are as in \eqref{eq:al-beca3}. With this set up, let $\scrollcal{E}$ be the 3-dimensional scroll over $\FF_e,$
and $\varphi: \Pp({\mathcal E}) \to  \FF_e$ be the usual projection.

\begin{prop}\label{prop:Xscroll}
The tautological line bundle $L = \Oc_{\Pp({\mathcal E})}(1)$ defines an embedding
$$\Phi:= \Phi_{|L|}: \, \Pp({\mathcal E}) \hookrightarrow  X \subset \Pin{n},$$where $X = \Phi( \Pp({\mathcal E}))$ is smooth, non-degenerate, of degree $d$, with
\begin{equation}\label{eq:ndnew}
n = 5e+2b+4t+27  \;\;\; {\rm and} \;\;\; d = L^3 =8e+5b+7t+40.
\end{equation}
Moreover,
\begin{equation}\label{eq:vannew}
h^i(X, L) = 0, \;\; i \geq 1.
\end{equation}
\end{prop}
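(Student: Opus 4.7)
The plan is to derive the proposition directly from the structural and cohomological results already established for $\mathcal{E}$ in Section 3, using standard facts about projective bundles. Since $\mathcal{E}$ is very ample on $\FF_e$ (Proposition \ref{Euniform} and its ambient assumptions), the tautological line bundle $L = \Oc_{\Pp({\mathcal E})}(1)$ is very ample on $\Pp({\mathcal E})$ by definition. Therefore the complete linear system $|L|$ defines a closed embedding $\Phi$, and $X = \Phi(\Pp({\mathcal E}))$ is smooth of dimension $3$ since $\Pp({\mathcal E})$ is. Non-degeneracy is automatic, since $\Phi$ is given by the complete linear system $|L|$.

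For the dimension of the ambient projective space and the cohomology vanishing, I would invoke the projection formula together with the fact that $\varphi_* L = {\mathcal E}$ and $R^j\varphi_* L = 0$ for $j \geq 1$. The Leray spectral sequence then collapses to give
\[
h^i(X,L) \;=\; h^i\bigl(\Pp({\mathcal E}),L\bigr) \;=\; h^i(\FF_e, {\mathcal E}),
\]
for every $i \geq 0$. Applying Proposition \ref{prop:com} yields simultaneously $h^i(X,L) = 0$ for $i \geq 1$, and $h^0(X,L) = 5e+2b+4t+28$. Since $\Phi$ is defined by the complete linear system, $n+1 = h^0(X,L)$, so $n = 5e+2b+4t+27$ as claimed.

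For the degree, I would use the general formula \eqref{eq:degree}, namely $d = L^3 = c_1({\mathcal E})^2 - c_2({\mathcal E})$, and plug in the Chern-class data from \eqref{eq:C1C2} and the intersection numbers $C_0^2 = -e$, $C_0\cdot f = 1$, $f^2 = 0$. Concretely,
\[
c_1({\mathcal E})^2 \;=\; \bigl(4C_0 + (b+3e+6+t)f\bigr)^2 \;=\; -16e + 8(b+3e+6+t) \;=\; 8e+8b+8t+48,
\]
so that $d = 8e+8b+8t+48 - (3b+8+t) = 8e+5b+7t+40$, matching \eqref{eq:ndnew}.

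There is no serious obstacle here: all the technical work has been absorbed into Proposition \ref{Euniform} (which provides very ampleness and the extension \eqref{eq:al-beca}) and Proposition \ref{prop:com} (which computes $h^\bullet({\mathcal E})$). What remains is a routine bookkeeping exercise combining the very ampleness of ${\mathcal E}$, the projection formula for $\varphi\colon \Pp({\mathcal E})\to\FF_e$, and the intersection-theoretic identity on a Hirzebruch surface; the only place to be careful is in correctly tracking the shifts by $e$, $b$, and $t$ in the numerical formulas.
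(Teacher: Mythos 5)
Your proposal is correct and follows essentially the same route as the paper: very ampleness of $L$ is inherited from that of $\mathcal{E}$, the degree comes from the identity $d = c_1(\mathcal{E})^2 - c_2(\mathcal{E})$ of \eqref{eq:degree}, and the value of $n$ together with the vanishing \eqref{eq:vannew} follow from Leray's isomorphism combined with Proposition \ref{prop:com}. Your intersection-theoretic computation of $c_1(\mathcal{E})^2 = 8e+8b+8t+48$ and hence $d = 8e+5b+7t+40$ checks out, and your write-up simply makes explicit the steps the paper leaves implicit.
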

\begin{proof} The very ampleness of $L$ follows from that of  ${\mathcal E}$. The expression for the degree $d$ of $X$ in \eqref{eq:ndnew} follows from \eqref{eq:degree}.  Leray's isomorphisms and  Proposition \ref{prop:com} give
\eqref{eq:vannew} whereas the first part of \brref{eq:ndnew} follows from Proposition \ref{prop:com}, because $n+1 = h^0(X,L) = h^0(\FF_e, {\mathcal E}).$
\end{proof}

%

\subsection{The component of the Hilbert scheme containing [$X$]}

In what follows, we are interested in studying the Hilbert scheme parametrizing closed subschemes of
$\Pp^{n}$ having the same Hilbert polynomial $P(T):=P_{X}(T) \in \mathbb{Q}[T]$ of $X$, i.e. the numerical polynomial defined by
$$P(m)= \chi(X, mL)= \frac{1}{6}m^3L^3-\frac{1}{4}m^2L^2\cdot K+\frac{1}{12}mL\cdot(K^2+c_2)+\chi(\Oc_{X}),  \mbox{for all $m \in \mathbb{Z}$},$$(cf.\,\cite[Example 15.2.5]{Fu}). For basic facts on Hilbert schemes we refer to e.g. \cite{Se}.

A scroll $X \subset \Pp^{n},$ as above, corresponds to a point $[X] \in \mathcal H_3^{d,n}$, where $\mathcal H_3^{d,n}$ denotes the  Hilbert scheme parametrizing closed subschemes of $\Pp^{n}$ with Hilbert polynomial $P(T)$ as above,
where $n$ and $d$ are as in \eqref{eq:ndnew}. Let
\begin{equation}
N : = N_{X/\Pp^{n}}
\end{equation}
denote the normal bundle of $X$ in ${\mathbb P}^{n}$. From standard facts on Hilbert schemes (see, for example, \cite[Corollary 3.2.7]{Se}), one has
\begin{equation}\label{eq:tangspace}
T_{[X]} (\mathcal H_3^{d,n}) \cong H^0(N)
\end{equation}
and
\begin{equation}\label{eq:expdim}
h^0(N) - h^1(N) \le \dim_{[X]}(\mathcal H_3^{d,n}) \le h^0(N),
\end{equation}
where the left-most integer in \eqref{eq:expdim} is the {\em expected dimension} of $\mathcal H_3^{d,n}$ at $[X]$ and
where equality holds on the right in  \eqref{eq:expdim} if and only if $X$ is {\em unobstructed} in $\Pp^{n}$ (namely, iff
$[X] \in \mathcal H_3^{d,n}$ is a smooth point).

In the next result we exhibit components of the Hilbert schemes of scrolls, as in Proposition \ref{prop:Xscroll},
which are generically smooth and of the expected dimension.

\begin{theo}\label{thm:Hilbertscheme} Assume $e \leq 2$ and $b = 2e + 3 + t$. Then, for any $t \geq 0$,
there exists an irreducible component $\mathcal X_e \subseteq \mathcal H_3^{d,n}$, which is generically smooth and of (the expected) dimension
\begin{equation}\label{eq:expdimXe}
\dim(\mathcal X_e) =  n(n+1) + 9e + 20 + 6t,
\end{equation} where $n = 9 e + 33 + 6t$ and $d = 18 e + 55 + 12 t$, such that $[X]$ sits in the smooth locus of $\mathcal X_e$.
\end{theo}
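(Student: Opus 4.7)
The strategy is to show that $[X]$ is an unobstructed point of $\mathcal H_3^{d,n}$ and to compute $h^0(N)$ explicitly, where $N := N_{X/\Pin{n}}$. By the standard facts recalled in \eqref{eq:tangspace}--\eqref{eq:expdim}, it suffices to prove $h^1(N)=0$ and $h^0(N) = n(n+1)+9e+20+6t$: these will at once produce an irreducible component $\mathcal X_e \subseteq \mathcal H_3^{d,n}$ through $[X]$ which is generically smooth, of the expected dimension, with $[X]$ in its smooth locus.

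The first step is to pass to $T_X$ via the normal and Euler sequences. From
\begin{equation*}
0 \to T_X \to \restrict{T_{\Pin{n}}}{X} \to N \to 0
\end{equation*}
and the Euler sequence on $\Pin{n}$ restricted to $X$, using the vanishings $h^i(X,L)=0$, $i \geq 1$, from \eqref{eq:vannew} and $h^0(X,L)=n+1$ from Proposition \ref{prop:Xscroll}, one gets $h^0(\restrict{T_{\Pin{n}}}{X}) = (n+1)^2-1$ and $h^i(\restrict{T_{\Pin{n}}}{X})=0$ for $i \geq 1$. Hence
\begin{equation*}
h^0(N) = (n+1)^2-1 - h^0(T_X) + h^1(T_X), \qquad h^1(N) = h^2(T_X),
\end{equation*}
and the problem is reduced to computing $h^i(T_X)$ for $i=0,1,2$.

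The second step is to descend the cohomology of $T_X$ to $\FFe$. Using the relative tangent sequence $0 \to T_{X/\FFe} \to T_X \to \varphi^* T_{\FFe} \to 0$ together with the relative Euler sequence $0 \to \Oc_X \to \varphi^*(\mathcal E^\vee) \otimes L \to T_{X/\FFe} \to 0$, and applying Leray combined with the projection formula (using $\varphi_* L = \mathcal E$, $R^i\varphi_* L = 0$ for $i \geq 1$, and $\varphi_* \Oc_X = \Oc_{\FFe}$), the computation reduces to that of $h^i(\FFe, \mathcal End(\mathcal E))$ and $h^i(\FFe, T_{\FFe})$ for $i=0,1,2$. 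The cohomology of $T_{\FFe}$ is classical; for $\mathcal End(\mathcal E) = \mathcal E \otimes \mathcal E^\vee$, I would successively tensor \eqref{eq:al-beca} with $A^\vee$ and $B^\vee$ to reduce to the cohomology of the line bundles $\Oc_{\FFe}$, $A \otimes B^\vee$ and $B \otimes A^\vee$, to be evaluated via Leray on $\pi:\FFe \to \Pp^1$ and the splittings of $\mathrm{Sym}^k(\Oc_{\Pp^1} \oplus \Oc_{\Pp^1}(-e))$ already used in the proof of Proposition \ref{prop:com}.

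The delicate point, and the main obstacle, is the verification that under $b = 2e+3+t$ and $e \leq 2$ all unwanted $h^1$'s (and $h^2(T_X)$) vanish. The hypothesis $e \leq 2$ is precisely what is needed to keep the degrees of all $\Pp^1$-summands arising after the splittings above in the range $\geq -1$, so that the relevant $h^1$'s on $\Pp^1$ drop out; the case $e=2$ is the borderline case, consistent with Remark \ref{rem:osserva}. Once these vanishings are established, a direct arithmetic check with $n = 9e + 33 + 6t$ and $d = 18e+55+12t$ yields the required $h^0(N) = n(n+1) + 9e + 20 + 6t$ and $h^1(N)=0$, completing the proof.
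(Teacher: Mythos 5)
Your proposal is correct and, for the heart of the matter, follows the paper's own route: reduce $H^i(N)$ to $H^{i+1}(T_X)$ via the restricted Euler and normal sequences, compute $H^j(T_X)$ from the relative tangent sequence of $\varphi$, push everything down to $\FF_e$, and use the extension \eqref{eq:al-beca} to reduce to the cohomology of $\Oc_{\FF_e}$, $A-B$, $B-A$ and $T_{\FF_e}$; the hypotheses $e\le 2$ and $b=2e+3+t$ enter exactly where you place them, forcing $h^1(A-B)=0$ and $h^1(B-A)=h^2(B-A)=0$. Your two departures are equivalent repackagings rather than different arguments, but they are worth recording. First, you push down $T_{X/\FF_e}$ via the relative Euler sequence, landing on $\mathcal E\otimes\mathcal E^{\vee}$, whereas the paper identifies $T_{X/\FF_e}$ with $2L-\varphi^{*}c_1(\mathcal E)$ and filters $Sym^2\mathcal E\otimes\det(\mathcal E)^{-1}$; since $\mathcal E$ has rank two these differ only by a trivial summand, so the same three line bundles appear either way. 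Second, you extract $h^0(N)$ directly from the four-term exact sequence (the paper's \eqref{eq:5.flaminio}) using $h^0(T_X)$ and $h^1(T_X)$, while the paper computes $\chi(N)$ by Hirzebruch--Riemann--Roch from the Chern classes of $N$ and the numerical invariants of $X$; your count is the more economical of the two and is consistent with the paper's Corollary \ref{flaminio}. One small caution on your phrasing: not \emph{all} first cohomologies on the base vanish --- $h^1(T_{\FF_e})=e-1$ survives for $e=2$ --- but, as your reduction formula already records, this term only feeds into $h^1(T_X)$ and hence into $h^0(N)$, not into $h^1(N)\cong h^2(T_X)$, so unobstructedness of $[X]$ is unaffected and the final arithmetic closes as you claim.
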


\begin{rem}
\label{rem:osserva}
\begin{itemize}
\normalfont{
\item[(i)] Notice that, for any fixed integer $t \geq 0$, $b =2e + 3 + t$ is the maximal value that $b$ can achieve according to \eqref{boundbl}. Correspondingly, one can compute $n$ and $d$ by simply substituting this value of $b$ in \eqref{eq:ndnew}, which exactly gives $ n = 9 e + 33 + 6t$ and $d = 18 e + 55 + 12 t$ as in the statement of Theorem \ref{thm:Hilbertscheme}.
\item[(ii)] Assumptions $e \leq 2$ and $b = 2e + 3 + t$ in Theorem \ref{thm:Hilbertscheme} are not inherently imposed by the problem, but added here only to simplify
technical details in the proof of Theorem \ref{thm:Hilbertscheme} (see details of proof below). One could as well consider all cases $-2 < b \leq 2e + 3 + t$, but  then an exhaustive analysis  of all possible numerical values for  $b$, $e$ and $t,$ compatible with \eqref{esistenzafibra}, \eqref{ass x v.a.}, \eqref{boundbl}, would be required, as well as thorough parameter computations for the construction of threefolds $X$ as in Proposition \ref{prop:Xscroll}. This approach would be beyond the scope of this note.
Thus, to simplify the proof of Theorem \ref{thm:Hilbertscheme}, we will assume $b = b_l < 6 + t + e$ which, by \eqref{boundbl},
is indeed the case as soon as $e \leq 2$, as well as  $b \geq 2e + 3 + t$ which, together with \eqref{boundbl}, gives exactly $b= 2e + 3 + t$.  We would like to stress that, even under the conveniently chosen numerical assumptions, Theorem \ref{thm:Hilbertscheme} gives infinitely many classes of examples of Hilbert schemes: for any $e \in \{0,1,2\}$ and for any integer $t \geq 0. $ Indeed one notices that \eqref{esistenzafibra} is always satisfied when $b_m = 3e + 6+t$ and $b_l = b= 2e + 3 + t$, for any $t \geq 0.$
\item[(iii)] Under the numerical assumptions of Theorem \ref{thm:Hilbertscheme}, all bundles $\mathcal E$ will split (cf.\,\eqref{eq:split}).}
\end{itemize}
\end{rem}

\begin{proof}[Proof of Theorem \ref{thm:Hilbertscheme}] By \eqref{eq:tangspace} and \eqref{eq:expdim}, the statement will follow by showing that $H^i(X,N)=0$, for $i \geq 1$, and computing $h^0(X,N) = \chi(X, N)$. The necessary arguments, and cohomological  computations, run as in \cite[Theorem 4.4]{fa-fla}, with appropriate obvious modifications, hence we omit most of the details.

From the  Euler sequence on ${\mathbb P}^{n}$ restricted to $X$
$$0\lra {\mathcal O}_{X} \lra {\mathcal O}_{X}(1)^{\oplus (n+1)} \lra T_{{{\mathbb P}^{n}}|{X}} \lra  0$$and the facts that
$H^{i}(X,{\mathcal O}_{X})= H^{i}(\FF_e,{\mathcal O}_{\FF_e})= 0, \;\text{for}\;i\ge 1,$ and $X$ is non--degenerate in $\mathbb P^n$, with $n$ as in \eqref{eq:ndnew},  one has:

\begin{equation}\label{eq:tangcom}
h^0(X,T_{{ {\mathbb P}^{n}}|{X}}) = (n+1)^2 -1 \; \; \mbox{and} \;\;
h^i(X,T_{{ {\mathbb P}^{n_e}}|{X}})=0, \; \mbox{for} \; i\ge 1.
\end{equation}

The normal sequence
\begin{eqnarray}\label{eq:seqnorm} 0\lra T_{X} \lra T_{{ {\mathbb P}^{n}}|{X}}
\lra N \lra 0
\end{eqnarray}
 therefore gives
\begin{eqnarray}\label{eq:cohnorm}
H^{i}(X,N) \cong H^{i+1}(X,T_{X}) \qquad {\text {for} \quad  i\ge 1.}
\end{eqnarray}

\begin{claim}\label{cl:com} $H^i(X,N) = 0$, for $i \geq 1$.
\end{claim}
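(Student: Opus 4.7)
The plan is to reduce, via the isomorphisms $H^i(X, N) \cong H^{i+1}(X, T_X)$ of \eqref{eq:cohnorm}, to the vanishing $H^j(X, T_X) = 0$ for every $j \geq 2$, and then to split this via the relative tangent sequence
\begin{equation*}
0 \to T_{X/\FF_e} \to T_X \to \varphi^* T_{\FF_e} \to 0.
\end{equation*}
Since $\varphi: X \to \FF_e$ is a $\Pp^1$-bundle, both summands can be pushed down to $\FF_e$ via the projection formula and Leray, turning the problem into an elementary cohomology computation on the Hirzebruch surface.

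I would first dispose of the pullback factor. From $\varphi_* \Oc_X = \Oc_{\FF_e}$ and $R^i \varphi_* \Oc_X = 0$ for $i \geq 1$, Leray gives $H^j(X, \varphi^* T_{\FF_e}) \cong H^j(\FF_e, T_{\FF_e})$, which vanishes for $j \geq 3$ by dimension. For $j = 2$ I would use the relative tangent sequence of $\pi: \FF_e \to \Pp^1$, identify $T_{\FF_e/\Pp^1} \equiv 2 C_0 + e f$ and $\pi^* T_{\Pp^1} \equiv 2f$, and then Serre dualize on $\FF_e$ (with $K_{\FF_e} \equiv -2 C_0 - (e+2) f$); in both cases $K_{\FF_e} - D$ has strictly negative coefficient of $C_0$, hence no global sections, so the required $H^2$ vanishes.

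For the relative piece I would invoke the relative Euler sequence
\begin{equation*}
0 \to \Oc_X \to \varphi^* \mathcal{E}^\vee \otimes L \to T_{X/\FF_e} \to 0.
\end{equation*}
Since $H^j(X, \Oc_X) = H^j(\FF_e, \Oc_{\FF_e}) = 0$ for $j \geq 1$, it suffices to check $H^j(X, \varphi^* \mathcal{E}^\vee \otimes L) = 0$ for $j \geq 2$. The projection formula combined with $\varphi_* L = \mathcal{E}$ and $R^i \varphi_* L = 0$ for $i > 0$ reduces this to $H^j(\FF_e, \mathcal{E}^\vee \otimes \mathcal{E}) = 0$ for $j \geq 2$. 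Here Remark \ref{rem:osserva}(iii) is essential: under our numerical hypotheses $\mathcal{E}$ splits as $A \oplus B$, so
\begin{equation*}
\mathcal{E}^\vee \otimes \mathcal{E} \cong \Oc_{\FF_e}^{\oplus 2} \oplus (A \otimes B^{-1}) \oplus (B \otimes A^{-1}),
\end{equation*}
and with $A \otimes B^{-1} \equiv 2 C_0 + (e+1) f$ the required vanishing on each line-bundle summand is an elementary Serre-duality check on $\FF_e$.

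Putting the two pieces together yields $H^j(X, T_X) = 0$ for $j \geq 2$, hence $H^i(X, N) = 0$ for $i \geq 1$. The main obstacle is the relative step: everything hinges on the splitting $\mathcal{E} = A \oplus B$ furnished by Remark \ref{rem:osserva}(iii). In its absence one would have to control a non-trivial extension class and replace the direct-sum decomposition of $\mathcal{E}^\vee \otimes \mathcal{E}$ by a two-step filtration with line-bundle graded pieces, which is doable but would lengthen the argument substantially and account for the restrictive hypothesis $b = 2e + 3 + t$ built into the statement.
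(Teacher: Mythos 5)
Your argument is correct, and it follows the paper's skeleton (normal sequence $\Rightarrow$ reduce to $H^{j}(X,T_X)=0$ for $j\ge 2$ $\Rightarrow$ relative tangent sequence of $\varphi$ $\Rightarrow$ push everything down to $\FF_e$), but it handles the relative tangent bundle by a genuinely different decomposition. The paper identifies $T_{X/\FF_e}$ with the line bundle $2L-\varphi^{*}c_1(\mathcal E)$, pushes it down to $Sym^2\mathcal E\otimes(-c_1(\mathcal E))$, and then runs through the three-step filtration of $Sym^2\mathcal E$ with graded pieces $2B$, $A+B$, $2A$, so that the vanishing reduces to $H^j(A-B)$, $H^j(\mathcal O_{\FF_e})$, $H^j(B-A)$. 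You instead use the relative Euler sequence and the splitting $\mathcal E=A\oplus B$ to reduce to $H^2(\FF_e,\mathcal E^\vee\otimes\mathcal E)=0$, which decomposes into the same line-bundle checks; the two computations agree because for rank two one has $\mathcal E^\vee\otimes\mathcal E\cong\mathcal O_{\FF_e}\oplus\bigl(Sym^2\mathcal E\otimes\det\mathcal E^{-1}\bigr)$. What the paper's filtration buys is independence from the splitting (it would survive a nonzero extension class, which is why the authors phrase it that way before deriving \eqref{eq:split}); what your route buys is brevity, since only the $j=2$ vanishing on the surface is needed for the Claim. One small point of logical hygiene: Remark \ref{rem:osserva}(iii) merely forward-references \eqref{eq:split}, which the paper establishes \emph{inside} the proof containing this Claim, so you should verify the splitting directly rather than cite it --- but that is the one-line check $h^1(\FF_e,A-B)=h^1\bigl(\Pp^1,\Oc_{\Pp^1}(e+1)\oplus\Oc_{\Pp^1}(1)\oplus\Oc_{\Pp^1}(1-e)\bigr)=0$ for $e\le 2$, so there is no real gap.
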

\begin{proof}[Proof of Claim \ref{cl:com}] \, From  \eqref{eq:tangcom} and \eqref{eq:seqnorm}, one has $h^j(X,N) = 0$, for $j\ge 3$.
For the other cohomology spaces, we use \eqref{eq:cohnorm}. In order to compute $H^{j}(X,T_{X})$ we use the scroll map
$\varphi:{\mathbb P}({\mathcal E})\lra \FF_e$ and we consider the relative cotangent bundle sequence:
\begin{eqnarray}\label{eq:relativctgbdl}
0\to \varphi^{*}({\Omega}^1_{\FF_e})\to {\Omega}^1_{X}
\to {\Omega}^1_{X|{\FF_e}} \lra 0.
\end{eqnarray}The adjunction theoretic characterization of the scroll and \eqref{eq:C1C2} give
$$K_{X}= -2L+\varphi^{*}(K_{\FF_e}+c_1({\mathcal E}))=-2L+\varphi^{*}(K_{\FF_e}+4C_{0}+ (3e+6+t+b) f)$$
thus
$${\Omega}^1_{X|{\FF_e}}=K_{X}+\varphi^{*}(-K_{\FF_e})=
-2L+\varphi^{*}(4C_{0}+(3e+6+t+b) f)$$
which, combined with the dual of \brref{eq:relativctgbdl}, gives
\begin{eqnarray}\label{eq:relativetgbdl}
0 \to 2L-\varphi^{*}(4C_{0}+(3e+6+t+b) f) \to T_{X}
\to \varphi^{*}(T_{\FF_e}) \to 0.
\end{eqnarray}

As in \cite[Theorem 4.4]{fa-fla}, if $e\neq 0,$
\begin{eqnarray}\label{eq:50}
h^0(X, \varphi^*(T_{\FF_e}))= h^0(\FF_e, T_{\FF_e}) = e+5, \nonumber\\
h^1(X, \varphi^*(T_{\FF_e}))= h^1(\FF_e, T_{\FF_e}) = e-1,\\
h^j(X, \varphi^*(T_{\FF_e}))= h^j(\FF_e, T_{\FF_e}) = 0, \; \mbox{for} \; j \ge 2, \nonumber
\end{eqnarray}whereas
\begin{eqnarray}\label{eq:50bis}
h^0(X, \varphi^*(T_{\FF_0}))= h^0(\FF_0, T_{\FF_0}) = 6,\\
h^j(X, \varphi^*(T_{\FF_0}))= h^j(\FF_0, T_{\FF_0}) = 0, \; \mbox{for} \; j \ge 1. \nonumber
\end{eqnarray}
\medskip
\noindent
The cohomology of $2L-\varphi^{*}(4C_{0}+(3e+6+t+b) f)$ in \eqref{eq:relativetgbdl} is computed as in \cite[Theorem 4.4]{fa-fla}.
Since  $R^{i}\varphi_{*}(2L)=0$ for $i \ge 1$ (see \cite[Ex. 8.4, p. 253]{H}), projection formula and Leray's isomorphism give
$$H^i(X, 2L-\varphi^{*}(4C_{0}+(3e+6+t+b) f)) \cong \\
H^i({\FF_e}, Sym^2{\mathcal E}\otimes (-4C_0-(3e+6+t+b) f)),
\; \forall \; i \ge 0.$$


As in the proof of \cite[Theorem 4.4]{fa-fla}, the fact that $\mathcal E$ fits in \eqref{eq:al-beca} implies there exists a finite filtration
$$Sym^2({\mathcal E})=F^{0}\supseteq F^{1}\supseteq F^{2} \supseteq F^{3}=0$$s.t.
$$ F^{0}/F^{1}\cong 2B, \; F^{1}/F^{2} \cong A+ B, \; F^{2} \cong 2A,$$since $F^3=0$ (for technical details, we refer the reder to  \cite[Theorem 4.4]{fa-fla}).
Thus, we get the following exact sequences
\begin{equation}\label{filtraz}
0 \to F^{1} \to Sym^2({\mathcal E}) \to 2B\to 0 \;\;\; \mbox{and} \;\;\; 0 \to  2A \to F^{1}  \to A+B\to 0,
\end{equation}Tensoring the two exact sequences in \brref{filtraz} by $-c_1({\mathcal E})=-4C_{0}-(3e+6+t+b) f=-A-B$, we get respectively:
\begin{equation}\label{filtraz1tw}
0 \to F^{1} (-4C_{0}-(3e+6+t+b) f) \to Sym^2({\mathcal E}) \otimes (-4C_{0}-(3e+6+t+b) f) \to B-A\to 0,
\end{equation}
\begin{equation}\label{filtraz2tw}
0 \to A-B\to F^{1} (-4C_{0}-(3e+6+t+b) f)  \to {\mathcal O}_{F_{e}}\to 0.
\end{equation}

From \brref{eq:al-beca3} it follows that $A-B=2C_0+(3e+4+t-b)f$. Now  $R^{0}{\pi}_*(2C_0 + (3e+4+t-b)f) \cong (\Oc_{\Pp^1} \oplus \Oc_{\Pp^1} (-e) \oplus \Oc_{\Pp^1} (-2e)) \otimes \Oc_{\Pp^1} (3e+4+t-b)$ and $R^{i}{\pi_e}_*(2C_0 + (3e+4+t-b)f)=0$, for $i>0$.  Hence, from Leray's isomorphism and Serre duality, we have
\begin{eqnarray*}
h^j(A - B)&=& h^j(\Pp^1, (\Oc_{\Pp^1} \oplus \Oc_{\Pp^1} (-e) \oplus \Oc_{\Pp^1} (-2e)) \otimes \Oc_{\Pp^1} (3e+4+t-b))\\
&=& h^j(\Oc_{\Pp^1} (3e+4+t-b)) + h^j(\Oc_{\Pp^1} (2e+4+t-b))+ h^j(\Oc_{\Pp^1} (e+4+t-b))\\
&=& 0, \text{ if } j\ge 2;
\end{eqnarray*}
\begin{eqnarray*}
h^1(A - B)&=& h^1(\Oc_{\Pp^1} (3e+4+t-b)) + h^1(\Oc_{\Pp^1} (2e+4+t-b))+ h^1(\Oc_{\Pp^1} (e+4+t-b))\\
&\cong&h^0(\Oc_{\Pp^1} (b-6-t-3e)) + h^0(\Oc_{\Pp^1} (b-6-t-2e))+ h^0(\Oc_{\Pp^1} (b-6-t-e)).
\end{eqnarray*}This implies that, if $b< 6+t+e$, then $h^1(A - B)=0$.

Since by \brref{boundbl} we have $b=b_{\ell} < 2e+4+t$, notice that
$2e+4+t\le e+6+t$ is equivalent to our numerical assumption $e\le 2$.
In particular, under the assumptions \eqref{boundbl} and $e \leq 2$,
the case $b\ge 6+t+e$ cannot occur. Hence, under these assumptions,
no indecomposable vector bundles can arise, i.e. any $\mathcal E$ is such that
\begin{equation}\label{eq:split}
{\mathcal E}=A\oplus B.
\end{equation} Moreover the condition $h^1(A-B)=0$,  along with  $h^{i}({\mathcal O}_{\FF_{e}})=0,$ for $i\ge1,$ and  the cohomology associated to \brref{filtraz2tw}
give $h^i(F^{1} (-4C_{0}-(3e+6+t+b) f)=0$ for $i\ge 1$.

We now compute the cohomology of $B-A$. Using Serre duality,
\begin{eqnarray*}
H^j( B-A)&=& H^j(-2C_0-(3e+4+t-b)f)\\
&\cong& H^{2-j}(-2C_0-(e+2)f+2C_0+(3e+4+t-b)f))\\
&\cong&H^{2-j}((2e+2+t-b)f)\\
&\cong& H^{2-j}(\Oc_{\Pp^1} (2e+2+t-b)).
\end{eqnarray*} Notice that, when  $b\ge 2e+3+t$ one has $h^2(B-A)=0$.

Thus numerical assumption $b = 2e+3+t$ in the statement is compatible with \eqref{boundbl}
and ensures the vanishing of $H^2(B-A)$. It also implies $H^1( B-A)\cong H^1(\Oc_{\Pp^1}(-1)) = 0$.

From the cohomology sequence associated to \brref{filtraz1tw} it follows that  if $b=2e+3+t$ then
\begin{xalignat}{1}
\notag
h^j(X, 2L-\varphi^{*}(4C_{0}+(3e+6+t+b) f)) & = h^j(2L-\varphi^{*}(4C_{0}+(5e+9+2t) f))\\ \label{eq:hjsym2E}
 & =
h^j({\FF_e}, Sym^2{\mathcal E}\otimes (-4C_0-5e-9-2
t) f) \\ \notag
& = 0, \text{ for } j \ge 1.
\end{xalignat}

Using \eqref{eq:50}, \eqref{eq:50bis} and \eqref{eq:hjsym2E} in the cohomology
sequence associated to \brref{eq:relativetgbdl}, we get
\begin{equation}\label{eq:cohTXe}
h^j(X, T_{X}) = 0, \; \mbox{for} \; j \ge 2.
\end{equation}
Moreover
\begin{eqnarray}\label{eq:coh1TXe}
h^1(X, T_{X}) = \left\{\aligned  e-1&\qquad\;  \mbox{for} \; e\neq 0\\
 0& \qquad\; \mbox{for} \; e=0.\\
  \endaligned\right.
\end{eqnarray}
Isomorphism \brref{eq:cohnorm} concludes the proof of Claim \ref{cl:com}.
\end{proof}

Claim \ref{cl:com}, together with the fact that smoothness is an open condition, implies that
there exists an irreducible component $\mathcal X_e$ of
$\mathcal H_3^{d,n}$ which is generically smooth, of the expected dimension $\dim(\mathcal X_e) = h^0(X,N)= \chi(N)$, such that $[X]$ lies in its smooth locus
(recall \eqref{eq:tangspace},\,\eqref{eq:expdim}).


The Hirzebruch-Riemann-Roch theorem gives
\begin{eqnarray}\label{chiN}
\chi(N) &=& \frac{1}{6}(n_1^3-3n_1n_2+3n_3)+
\frac{1}{4}c_1(n_1^2-2n_2) \\
& & + \frac{1}{12}(c_1^2+c_2)n_1 +(n-3)\chi({\mathcal O}_{X}),\nonumber
\end{eqnarray}
where $n_i:=c_i(N)$ and $ c_i:=c_i(X).$

Setting, for simplicity,  $K:= K_{X}$,  Chern classes of $N$  can be obtained from \brref{eq:seqnorm}:
\begin{eqnarray}\label{valueniscrollP}
 n_1&=&K+(n+1)L;  \nonumber \\
\;\;\;\;\;\;\;\; n_2&=&\frac{1}{2}n(n+1)L^2+(n+1)LK+K^2-c_2; \\
n_3&=&\frac{1}{6}(n-1)n(n+1)L^3+\frac{1}{2}n(n+1)KL^2+ (n+1)K^2L \nonumber\\
& &-(n+1)c_2L-2c_2K+K^3-c_3. \nonumber
\end{eqnarray} The numerical invariants of $X$ can be easily computed by:
\begin{xalignat}{2}
  KL^2 &= -2d+6e+28+6t+6b;&  K^2L &= 4d-20b-20t-20e-96;\notag \\
  c_2L &= 2e+24+2b+2t;& K^3 &= -8d+48b+48t+48e+240;\notag \\
  -Kc_2 &= 24;&  c_3 &= 8. \nonumber
\end{xalignat}

Plugging these in \brref{valueniscrollP} and then in \brref{chiN}, one gets
$$\chi(N)=(d-3e-3b-3t-12)n+122+21t+21e+21b-3d.$$From \eqref{eq:ndnew}, one has
$d = 8e+5b+7t+40$ and $n = 5e+2b+4t+27 $; in particular
$$d-3e-3b-3t-12= n+1.$$ Thus
$$\chi(N) = (n+1) n  + 2 -3e +6b = (n+1) n + 9e + 20 + 6t,$$as in \eqref{eq:expdim},
with $n = 9e + 33 + 6t$  since $ b = 2e + 3 + t$.
\end{proof}

\begin{rem}\label{rem:flaminio} \normalfont{The proof of Theorem \ref{thm:Hilbertscheme} gives
\begin{equation}\label{eq:comNe}
h^0(N) = (n+1) n + 9e + 20 + 6t, \;\; h^i(N) = 0, \; i\geq 1.
\end{equation}
Using  \eqref{eq:tangcom} and \eqref{eq:comNe} in
the exact sequence \eqref{eq:seqnorm} and the values of $b$, $n$ and $d$ as in Theorem \ref{thm:Hilbertscheme},  one gets
\begin{equation}\label{eq:5.6}
\chi(T_{X}) = n - 6b + 3 e - 2 = 8e - 4b + 4t+25 = 13.
\end{equation}Moreover, from \eqref{eq:seqnorm} and \eqref{eq:tangcom}, one has:
\begin{equation}\label{eq:5.flaminio}
0 \to H^0(T_{X}) \to H^0(T_{\Pp^{n}|_{X}}) \stackrel{\alpha}{\to} H^0(N) \stackrel{\beta}{\to} H^1(T_{X}) \to 0.
\end{equation}
}
\end{rem}

\begin{cor}\label{flaminio} With the same assumptions as in Theorem \ref{thm:Hilbertscheme} one has:

\begin{itemize}
\item[i)] if $e\neq 0$,
$$h^0(T_{X}) =e+12, \;\; h^1(T_{X}) = e-1, \;\; h^j(T_{X}) = 0, \; \mbox{for} \; j \ge 2;$$
\item[ii)] if $e=0,$
$$h^0(T_{X}) =13, \;\; h^j(T_{X}) = 0, \; \mbox{for} \; j \ge 1.$$
\end{itemize}
\end{cor}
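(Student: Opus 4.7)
The plan is to observe that the Corollary follows essentially by bookkeeping from the ingredients already assembled in the proof of Theorem \ref{thm:Hilbertscheme} and in Remark \ref{rem:flaminio}: the higher cohomology of $T_X$ has already been computed in that proof, and $\chi(T_X)$ has been pinned down to the single value $13$, so the only thing left is to solve for $h^0(T_X)$.

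More precisely, the proof of Theorem \ref{thm:Hilbertscheme} — obtained via the relative tangent sequence \eqref{eq:relativetgbdl}, the filtration on $Sym^2(\mathcal E)$, and the vanishings \eqref{eq:50}, \eqref{eq:50bis}, \eqref{eq:hjsym2E} — directly produces \eqref{eq:cohTXe} and \eqref{eq:coh1TXe}. These already give
\[
h^j(T_X) = 0 \quad \text{for } j \geq 2,
\]
and
\[
h^1(T_X) = \begin{cases} e-1 & \text{if } e \neq 0,\\ 0 & \text{if } e = 0,\end{cases}
\]
which is exactly the content of the corollary concerning $h^j$ for $j \geq 1$ in both cases $(i)$ and $(ii)$. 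So the first step is simply to cite these two equations.

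The remaining task is to compute $h^0(T_X)$. For this I would invoke \eqref{eq:5.6} from Remark \ref{rem:flaminio}, where $\chi(T_X) = 13$ is established using the exact sequences \eqref{eq:seqnorm} and \eqref{eq:5.flaminio}, together with $h^0(T_{\Pp^n|_X}) = (n+1)^2 - 1$ from \eqref{eq:tangcom} and $h^0(N) = (n+1)n + 9e + 20 + 6t$, $h^i(N) = 0$ for $i \geq 1$ from \eqref{eq:comNe}, and the numerical identities $n = 9e + 33 + 6t$, $b = 2e+3+t$. Since $h^2(T_X) = h^3(T_X) = 0$, one then has $\chi(T_X) = h^0(T_X) - h^1(T_X)$, so
\[
h^0(T_X) = 13 + h^1(T_X),
\]
which specializes to $h^0(T_X) = e + 12$ when $e \neq 0$ and to $h^0(T_X) = 13$ when $e = 0$, as asserted.

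There is no genuine obstacle: the corollary is a short consequence of already-established cohomological vanishings plus the Euler characteristic identity of Remark \ref{rem:flaminio}. The only point worth being careful about is not to re-derive $\chi(T_X)$ from scratch; one should reuse the computation already carried out via \eqref{eq:5.flaminio}, \eqref{eq:tangcom} and \eqref{eq:comNe}, and simply transport the value $\chi(T_X) = 13$ into the identity $h^0(T_X) = \chi(T_X) + h^1(T_X)$.
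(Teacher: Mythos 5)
Your proposal is correct and follows essentially the same route as the paper: cite \eqref{eq:cohTXe} and \eqref{eq:coh1TXe} for $h^j(T_X)$ with $j\ge 1$, then combine $\chi(T_X)=13$ from \eqref{eq:5.6} with $h^0(T_X)=\chi(T_X)+h^1(T_X)$ to obtain $h^0(T_X)=e+12$ (resp.\ $13$ for $e=0$). The only difference is cosmetic; the paper records the intermediate expression $h^0(T_X)=9e-4b+4t+20$ (which appears to contain a typo, as the correct constant giving $e+12$ after substituting $b=2e+3+t$ is $24$), whereas you go directly through $13+h^1(T_X)$.
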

\begin{proof} Let $e\neq 0$, then $h^j(T_{X}) = 0$, for $j \ge 2$, is \eqref{eq:cohTXe} and $h^1(T_{X}) = e-1$, from \eqref{eq:coh1TXe}. We now use  \eqref{eq:5.6} to get that $h^0(T_{X}) =9e-4b+4t+20=e+12.$ A similar argument gives the desired values for $h^0$ and $h^j$ in the case $e=0.$
\end{proof}

The next result shows that, for $e=2$, scrolls arising from Proposition \ref{prop:Xscroll} do not fill up the component
$\mathcal X_2 \subseteq \mathcal H_3^{d,n}$.

\begin{theo}\label{thm:parcount}  Assumptions as in Theorem \ref{thm:Hilbertscheme}.
Let $\mathcal Y_e$ be the locus in $\mathcal X_e$ filled-up by $3$-fold scrolls $X$ as in Proposition
\ref{prop:Xscroll}. Then
\begin{eqnarray*}
{\rm codim}_{\mathcal X_e} ( \mathcal Y_e) = \left\{\aligned  e-1&\qquad\;  \mbox{for} \; e\neq 0\\
 0& \qquad\; \mbox{for} \; e=0\\
  \endaligned\right.
\end{eqnarray*}
 \end{theo}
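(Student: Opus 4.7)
The plan is to carry out an orbit-style parameter count for $\mathcal Y_e$ and compare it with $\dim \mathcal X_e = n(n+1) + 9e + 20 + 6t$ from Theorem \ref{thm:Hilbertscheme}. The decisive input, already isolated inside the proof of that theorem (see \eqref{eq:split}), is that under the hypotheses $e \leq 2$ and $b = 2e+3+t$ every very ample bundle $\mathcal E$ in our family is forced to split as $\mathcal E \cong A \oplus B$, with $A$ and $B$ the fixed line bundles of \eqref{eq:al-beca3}. Since numerical and linear equivalence coincide on $\FF_e$, the summands $A$ and $B$ are uniquely determined; consequently the abstract polarized scroll $(X, L) = (\Projcal{E}, \Oc_{\Pp(\mathcal E)}(1))$ is unique up to isomorphism, and $\mathcal Y_e$ coincides with the orbit of any single embedded $X \hookrightarrow \Pp^n$ under the natural action of $\mathrm{PGL}(n+1)$.

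Next I would compute the dimension of this orbit. Since $X$ is embedded by the complete linear system $|L|$, the stabilizer of $X$ in $\mathrm{PGL}(n+1)$ is identified with the subgroup $\mathrm{Aut}(X, L) \subseteq \mathrm{Aut}(X)$ of automorphisms fixing $L$. In our setting the scroll fibration $X \to \FF_e$ is canonical (the fibre class on $X$ is intrinsic) and the two non-isomorphic summands $A$ and $B$ cannot be exchanged, so every automorphism of $X$ preserves $L$ and the above subgroup coincides with $\mathrm{Aut}(X)$ itself. Being smooth and projective over $\mathbb C$, $X$ has a reduced automorphism group with Lie algebra $H^0(X, T_X)$, hence
\[
\dim \mathcal Y_e \;=\; (n+1)^2 - 1 - h^0(X, T_X).
\]
Feeding in Corollary \ref{flaminio}, namely $h^0(T_X) = e + 12$ for $e \neq 0$ and $h^0(T_X) = 13$ for $e = 0$, together with $n = 9e + 33 + 6t$, a direct subtraction yields
\[
\dim \mathcal X_e - \dim \mathcal Y_e \;=\; -n + 10e + 32 + 6t \;=\; e - 1 \qquad (e \neq 0),
\]
and the analogous computation produces codimension $0$ when $e = 0$, as claimed.

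The main technical obstacle will be the clean identification of the $\mathrm{PGL}(n+1)$-stabilizer of $X$ with $\mathrm{Aut}(X)$, which turns on the fact that every abstract automorphism of the scroll really does preserve the tautological polarization rather than mapping it to a nontrivial twist. The splitting $\mathcal E \cong A \oplus B$ with $A$ and $B$ of distinct numerical classes, forced by our numerical hypotheses, is exactly what forbids an exchange of summands and guarantees a canonical polarization; this is why the numerical assumptions of Theorem \ref{thm:Hilbertscheme} reappear here. A useful sanity check, implicit in Corollary \ref{flaminio}, is that $h^0(T_X)$ matches the expected value $\dim\mathrm{Aut}(\FF_e) + \dim \mathrm{Aut}(A\oplus B)/\mathbb{G}_m$, which is precisely what the orbit count predicts from a direct description of the automorphism group of the split scroll.
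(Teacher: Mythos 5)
Your argument is correct and is essentially the paper's own proof in different packaging: both rest on the splitting $\mathcal E \cong A\oplus B$ of \eqref{eq:split} to conclude that $X$ is unique up to abstract isomorphism (so $\mathcal Y_e$ is a single $\mathrm{PGL}(n+1)$-orbit), and your orbit--stabilizer count $\dim\mathcal Y_e=(n+1)^2-1-h^0(T_X)$ is exactly the paper's identification $\dim\mathcal Y_e=\dim\mathrm{Im}(\alpha)$ from the sequence \eqref{eq:5.flaminio}, with the codimension coming out as $\dim\mathrm{Coker}(\alpha)=h^1(T_X)$. Your final numerical subtraction and the paper's cohomological identity agree, so nothing is missing.
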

\begin{proof} If $\tau$ denotes the number of parameters counting isomorphism classes of projective bundles $\Pp({\mathcal E})$ as in Proposition \ref{prop:Xscroll}, then $\tau = 0$  since $\mathcal E = A \oplus B$ (see \eqref{eq:split}).  Therefore
$X \cong \Pp(A \oplus B)$ is uniquely determined by $A$ and $B$. Thus, by construction,
$\dim(\mathcal Y_e) = \dim({\rm Im}(\alpha))$. From \eqref{eq:5.flaminio},  $\dim({\rm Coker} (\alpha)) = h^1(T_{X})$ so

\begin{eqnarray*}
{\rm codim}_{\mathcal X_e} ( \mathcal Y_e) = \dim({\rm Coker}(\alpha))= \left\{\aligned  e-1&\qquad\;  \mbox{for} \; e\neq 0\\
 0& \qquad\; \mbox{for} \; e=0\\
  \endaligned\right.,
\end{eqnarray*}
\end{proof}

\section {Open Questions} We have seen in Theorem \ref{thm:parcount}  that  the locus  $\mathcal Y_e$  in $\mathcal X_e$ of  $3$-fold scrolls
$X$ as in Proposition \ref{prop:Xscroll},  does not necessarily  fill up $\mathcal X_e$ if $e=2$.  The following problems arise naturally:

\begin{enumerate}
\item[1)] Describe a variety $Z$ which is a candidate to represent the general point of the component $\mathcal X_2;$

\item[2)] Assuming that a description of a variety $Z$ as in 1) above is achieved, interpret the projective degeneration
of $Z$ to $X$, where $[X] \in \mathcal Y_2$, in terms of vector bundles on Hirzebruch surfaces;

\item[3)] Extend the results in this note to the case $e \ge3$.
\end{enumerate}

\end{document}